\documentclass{amsart}
\usepackage{mathptmx, amssymb, colonequals, stmaryrd}
\usepackage{color}
\definecolor{chianti}{rgb}{0.6,0,0}
\definecolor{meretale}{rgb}{0,0,.6}
\usepackage[colorlinks=true, pagebackref, hyperindex, citecolor=meretale, linkcolor=chianti]{hyperref}

\newtheorem{theorem}{Theorem}[section]

\theoremstyle{definition}
\newtheorem{remark}[theorem]{Remark}
\numberwithin{equation}{theorem}

\renewcommand{\ge}{\geqslant}
\renewcommand{\le}{\leqslant}
\renewcommand{\bar}{\overline}

\renewcommand{\to}{\longrightarrow}
\renewcommand{\mapsto}{\longmapsto}
\renewcommand{\mod}{\ \operatorname{mod}\,}

\newcommand{\la}{\langle}
\newcommand{\ra}{\rangle}
\newcommand{\onto}{\relbar\joinrel\twoheadrightarrow}
\newcommand{\frakm}{\mathfrak{m}}

\newcommand{\FF}{\mathbb{F}}
\newcommand{\ZZ}{\mathbb{Z}}

\begin{document}
\title{On an example of Nagarajan}

\author{Annie Giokas}
\address{Department of Mathematics, Purdue University, 150 N University St., West Lafayette, IN~47907, USA}
\email{agiokas@purdue.edu}

\author{Anurag K. Singh}
\address{Department of Mathematics, University of Utah, 155 South 1400 East, Salt Lake City, UT~84112, USA}
\email{singh@math.utah.edu}

\thanks{A.G. was supported by the Undergraduate Research Opportunities Program at the University of Utah, and A.K.S. was supported by NSF grant DMS~2101671.}
\maketitle

\section{Introduction}

Consider a finite group $G$ acting on a noetherian ring $R$ via ring automorphisms. The question whether the invariant ring $R^G$ is noetherian is a classical one, with positive results, in a sense, going back to Hilbert and Noether: If the order of the finite group is invertible in $R$, then~$R^G$ is noetherian~\cite{Hilbert:1890, Hilbert:1893, Noether:1915}; if $R$ is a finitely generated algebra over a noetherian ring $A$, and the action of $G$ on $R$ is via $A$-algebra automorphisms, then, again,~$R^G$ is noetherian~\cite{Noether:1926}.

On the other hand, Nagata gave an example of an artinian local ring $R$ containing a field of characteristic $p>0$, with an action of a cyclic group $G$ of order $p$, such that~$R^G$ is not noetherian~\cite[Proposition~0.10]{Nagata:1969}, see also~\cite[\S1]{Fogarty} and~\cite[Example~12]{Kollar}; while the ring in this example is of course not an integral domain, in the same paper, Nagata also constructs a pseudo-geometric local integral domain $R$ of dimension one and characteristic~$p>0$, with an action of a cyclic group $G$ of order~$p$, such that $R^G$ is not noetherian~\cite[Proposition~0.11]{Nagata:1969}. In contrast, if $G$ is a finite group acting on a Dedekind domain $R$, then $R^G$ is noetherian~\cite[Proposition~0.3, Remark~0.7]{Nagata:1969}.

In light of the above, it is natural to impose stronger hypotheses on $R$ and ask whether~$R^G$ is noetherian when $G$ is a finite group, and $R$ is normal \cite[Question~0.1]{Nagata:1969}, or even regular. These questions were settled in the negative by Nagarajan~\cite[\S4]{Nagarajan}, who constructed a formal power series ring~$R$ of dimension two, over a field of characteristic two, with the action of an involution $\sigma$ such that~$R^{\la\sigma\ra}$ is not noetherian. Our first goal in this paper is to point out how Nagarajan's example readily extends to each positive prime characteristic~$p$, providing an action of a cyclic group~$G$ of order $p$ on a formal power series ring $R\colonequals K\llbracket x,y\rrbracket$, with $K$ a field of characteristic $p$, such that the invariant ring $R^G$ is not noetherian. Other variations of Nagarajan's example may be found in~\cite{CL} and~\cite{Baba}.

Our other goal is to note that Nagarajan's construction extends readily to a curious characteristic zero example: there exists a formal power series ring $R\colonequals K\llbracket x,y\rrbracket$ over a field~$K$ of characteristic zero, with an action of the infinite cyclic group $G$, such that the invariant ring~$R^G$ is not noetherian. The positive characteristic and the characteristic zero examples are all sharp: in each case, the dimension of the regular local ring $R$ is the least possible, see Remark~\ref{remark:dvr}, as is the cardinality and the number of generators of the group~$G$.

While we have focused here on the noetherian property of $R^G$, related questions on the finite generation of $R^G$ have a rich history: in addition to Nagata's celebrated counterexamples to Hilbert's $14$th Problem~\cite{Nagata:1959, Nagata:1960}, we point the reader towards the papers~\cite{Mumford, Roberts, Steinberg, DF, Mukai, Freudenburg, Kuroda, Totaro}, and the references therein.

\section{The example}

Let $\FF$ be a field, and consider the purely transcendental extension field
\[
K\colonequals\FF(a_1,b_1,a_2,b_2,\dots),
\]
where the elements $a_n, b_n$ are indeterminates over $\FF$. Set $R\colonequals K\llbracket x,y\rrbracket$, i.e., $R$ is the ring of formal power series in the variables $x$ and $y$, with coefficients in $K$. Set
\[
f_n\colonequals a_nx+b_ny\qquad\text{ for }n\ge 1.
\]
Define an $\FF$-algebra endomorphism $\sigma$ of $R$ as follows:
\[
\sigma\colon\begin{cases}
x & \mapsto \ \ \ x,\\
y & \mapsto \ \ \ y,\\
a_n & \mapsto \ \ \ a_n+yf_{n+1},\\
b_n & \mapsto \ \ \ b_n-xf_{n+1}.
\end{cases}
\]
It is readily seen that $\sigma(f_n)=f_n$ for each $n\ge 1$, and also that $\sigma$ is surjective, hence an automorphism of $R$. With this notation, we prove:

\begin{theorem}
Let $K$ be a field constructed as above, $R\colonequals K\llbracket x,y\rrbracket$ a formal power series ring, and $G\colonequals\la\sigma\ra$ a cyclic group acting on $R$ as described above. If the field $K$ has positive characteristic $p$, then $G$ is a cyclic group of order $p$, whereas $G$ is infinite if $K$ has characteristic zero. In either case, the ring of invariants $R^G$ is not noetherian.
\end{theorem}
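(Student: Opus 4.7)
The plan has two parts. For the statement about the order of $G$, an induction on $k$ (using that $\sigma$ fixes $x$, $y$, and each $f_n$) gives $\sigma^k(a_n)=a_n+k\,yf_{n+1}$ and $\sigma^k(b_n)=b_n-k\,xf_{n+1}$, so $\sigma^k$ is the identity exactly when $k\cdot 1=0$ in $K$; this yields the claimed characteristic dichotomy. To show $R^G$ is non-noetherian, I plan to prove that the chain of ideals $(f_1)\subseteq(f_1,f_2)\subseteq\cdots$ in $R^G$ is strictly ascending, arguing by contradiction from a supposed relation $f_{n+1}=\sum_{i=1}^n g_if_i$ with $g_i\in R^G$.

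Comparing the degree-$1$ parts in $(x,y)$ of both sides, and setting $c_i\colonequals g_i(0,0)\in K$, yields the identities $a_{n+1}=\sum c_ia_i$ and $b_{n+1}=\sum c_ib_i$ in $K$. The technical heart of the argument is the estimate $\sigma(c_i)-c_i\in (x,y)^3R$: writing $g_i=c_i+h_i$ with $h_i\in(x,y)R$, the invariance $\sigma(g_i)=g_i$ rearranges to $\sigma(c_i)-c_i=-(\sigma-1)(h_i)$, and the defining formulas for $\sigma$ (extended from polynomials to rational functions in the $a_j,b_j$ using that $K\subset R$ consists of units) show that $\sigma-1$ carries $K$ into $(x,y)^2R$, and hence $(x,y)R$ into $(x,y)^3R$. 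With this in hand, applying $\sigma$ to $a_{n+1}=\sum c_ia_i$ and subtracting produces an identity whose degree-$2$ part reads $yf_{n+2}=\sum c_i\,yf_{i+1}$, since every term carrying a factor of $\sigma(c_i)-c_i$ already sits in $(x,y)^3R$. Iterating this step gives, for every $k\ge 0$,
\[
a_{n+k+1}=\sum_{i=1}^n c_ia_{i+k}\qquad\text{and}\qquad b_{n+k+1}=\sum_{i=1}^n c_ib_{i+k}
\]
in $K$.

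Finally, I would pick $N$ so large that every $c_i$ is a rational function of $a_1,b_1,\dots,a_N,b_N$, and choose $k\ge N$. The recurrence then expresses $a_{n+k+1}$ as a nontrivial linear combination of $a_{k+1},\dots,a_{k+n}$ with coefficients in the subfield $\FF(a_j,b_j:j\le N)$, contradicting the algebraic (hence linear) independence of $a_{k+1},\dots,a_{k+n+1}$ over this subfield. The main obstacle I anticipate is establishing the estimate $\sigma(c_i)-c_i\in(x,y)^3R$ and using it to propagate the recurrence one step at a time; once that is in place, the transcendence of the $a_j,b_j$ cleanly delivers the contradiction.
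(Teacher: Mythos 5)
Your proposal is correct and follows essentially the same route as the paper: establish the order of $\sigma$ from $\sigma^k(a_n)=a_n+kyf_{n+1}$, show that for an invariant the constant term $c_i$ satisfies a congruence killing the $xy$-coefficient of $\sigma(c_i)-c_i$, and iterate the resulting recurrence $a_{n+k+1}=\sum_i c_ia_{i+k}$ to contradict the algebraic independence of the $a_j$. The only (harmless) variation is that you prove $\sigma(c_i)-c_i\in(x,y)^3R$ directly from $\sigma(c_i)-c_i=-(\sigma-1)(h_i)$, whereas the paper establishes the analogous statement modulo $(x^2,y^2)R$ by expanding the invariant through degree two; both deliver exactly the vanishing of the $xy$-term that drives the induction.
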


\begin{proof}
For each $k\in\ZZ$, one has
\[
\sigma^k(a_n)\ =\ a_n+kyf_{n+1}\quad\text{ and }\quad\sigma^k(b_n)\ =\ b_n-kxf_{n+1},
\]
so the group $\la\sigma\ra$ has order $p$ if $K$ has characteristic $p>0$, and is infinite cyclic otherwise.

Let $\frakm$ denote the maximal ideal of $R$. We claim that for each $\alpha$ in $K$, one has
\begin{equation}
\label{equation:action:scalars}
\sigma(\alpha)\ \equiv\ \alpha\mod\frakm^2
\end{equation}
in $R$. To see this, suppose $\alpha=g/h$ for nonzero $g,h$ in $\FF[a_1,b_1,a_2,b_2,\dots]$. It is immediate from the definition that $\sigma(g)\equiv g\mod\frakm^2$. Since~$g$ is a unit in $R$, there exists~$g_2\in\frakm^2$ with~$\sigma(g)=g(1-g_2)$. Similarly, there exists $h_2\in\frakm^2$ with $\sigma(h)=h(1-h_2)$. But then
\[
\sigma\left(\frac{g}{h}\right)\ =\ \frac{g(1-g_2)}{h(1-h_2)}\ =\ \frac{g}{h}(1-g_2)(1+h_2+h_2^2+\cdots) \equiv\ \frac{g}{h}\mod\frakm^2,
\]
which proves the claim.

Given a power series $r\in R$, set $\bar{r}$ to be its constant term, i.e., $\bar{r}\in K$, and $r\equiv\bar{r}\mod\frakm$. We next claim that if $r\in R^G$, then~\eqref{equation:action:scalars} can be strengthened to
\begin{equation}
\label{equation:action:constant:term}
\sigma(\bar{r})\ \equiv\ \bar{r}\mod(x^2,\ y^2)R.
\end{equation}
Given $r\in R^G$, let $\alpha,\beta,\gamma$ be elements of $K$ such that
\[
r\ \equiv\ \bar{r}+\alpha x+\beta y +\gamma xy \mod(x^2,\ y^2)R.
\]
Since $\sigma(r)=r$, one has
\[
\sigma(\bar{r})+\sigma(\alpha) x+\sigma(\beta) y +\sigma(\gamma) xy\ \equiv\ \bar{r}+\alpha x+\beta y +\gamma xy \mod(x^2,\ y^2)R.
\]
By~\eqref{equation:action:scalars}, one has $\sigma(\alpha)\equiv\alpha\mod\frakm^2$, and $\sigma(\beta)\equiv\beta\mod\frakm^2$, and $\sigma(\gamma)\equiv\gamma\mod\frakm^2$, so the above display yields $\sigma(\bar{r})\equiv\bar{r}\mod(x^2,y^2)R$ as desired.

Lastly, we prove that $R^G$ is not noetherian by showing that
\[
f_{n+1}\ \notin\ (f_1,\dots,f_n)R^G\qquad\text{ for }n\ge 1,
\]
which, then, gives a strictly ascending chain of ideals in $R^G$. Suppose, to the contrary, that there exists an integer $n$ such that
\[
f_{n+1}\ =\ \sum_{k=1}^nr_kf_k
\]
where $r_k\in R^G$ for each $k$ with $1\le k\le n$. The above may be written as
\[
a_{n+1}x+b_{n+1}y\ =\ \sum_{k=1}^nr_k(a_kx+b_ky),
\]
so comparing the coefficients of $x$ yields
\begin{equation}
\label{equation:a}
a_{n+1}\ =\ \sum_{k=1}^n\bar{r}_k\,a_k.
\end{equation}
Applying $\sigma$ to the above equation gives
\[
a_{n+1}+yf_{n+2}\ =\ \sum_{k=1}^n\sigma(\bar{r}_k)(a_k+yf_{k+1}),
\]
i.e.,
\[
a_{n+1}+a_{n+2}xy+b_{n+2}y^2\ =\ \sum_{k=1}^n\sigma(\bar{r}_k)(a_k+a_{k+1}xy+b_{k+1}y^2).
\]
Since $\sigma(\bar{r}_k)\equiv\bar{r}_k\mod(x^2,y^2)R$ for each $k$ by~\eqref{equation:action:constant:term}, one obtains
\[
a_{n+1}+a_{n+2}xy\ \equiv\ \sum_{k=1}^n\bar{r}_k(a_k+a_{k+1}xy)\mod(x^2,\ y^2)R.
\]
In light of~\eqref{equation:a}, this simplifies to
\[
a_{n+2}xy\ \equiv\ \sum_{k=1}^n\bar{r}_k\,a_{k+1}xy\mod(x^2,\ y^2)R,
\]
from which one obtains
\begin{equation}
\label{equation:b}
a_{n+2}\ =\ \sum_{k=1}^n\bar{r}_k\,a_{k+1}.
\end{equation}
Repeating the argument that~\eqref{equation:a} implies~\eqref{equation:b} gives
\[
a_{n+m+1}\ =\ \sum_{k=1}^n\bar{r}_k\,a_{k+m}\qquad\text{ for each }m\ge 1.
\]
As $\bar{r}_1,\dots,\bar{r}_n$ are finitely many elements of the field $K$, this contradicts the assumption that~$a_1,a_2,\dots$ are infinitely many elements algebraically independent over $\FF$.
\end{proof}

\begin{remark}
\label{remark:dvr}
Consider a discrete valuation ring $R$, with an action of a group $G$. We claim that the invariant ring $R^G$ is either a field or a discrete valuation ring; in particular, $R^G$ is noetherian. To see this, let $v\colon R\smallsetminus\{0\}\onto\ZZ$ be the discrete valuation, and consider its restriction $\bar{v}\colon R^G\smallsetminus\{0\}\to\ZZ$. If the image of this map is $0$, then $R^G$ is a field; otherwise, the image is generated by a positive integer $n$, which yields the discrete valuation
\[
\frac{1}{n}\,\bar{v}\colon R^G\smallsetminus\{0\}\onto\ZZ.
\]
\end{remark}

\section*{Acknowledgments}

The second author is grateful to Bill Heinzer, Kazuhiko Kurano, and Avinash Sathaye for discussions regarding Nagarajan's paper.


\end{document}